\newtheorem{theorem}{Theorem}
\newtheorem{corollary}[theorem]{Corollary}
\newtheorem{lemma}[theorem]{Lemma}
\newtheorem{proposition}[theorem]{Proposition}
\DeclareMathOperator{\rank}{rank}
\DeclarePairedDelimiter{\set}{\{}{\}}
\DeclarePairedDelimiter{\abs}{\vert}{\vert}
\DeclarePairedDelimiter{\floor}{\lfloor}{\rfloor}
\begin{document}

\title[Spherical embeddings of symmetric association schemes in $\mathbb{R}^3$]{Spherical embeddings of symmetric association schemes in 3-dimensional 
Euclidean space}

\author{Eiichi Bannai \and Da Zhao}
\address{School of Mathematical Sciences, Shanghai Jiao Tong University, Shanghai, China.}
\email{\{bannai,jasonzd\}@sjtu.edu.cn}

\subjclass[2010]{Primary 05E30; Secondary 52C99}

\keywords{Association scheme, spherical embedding, 3 dimensional Euclidean geometry, regular polyhedron, quasi-regular polyhedron}

\begin{abstract}
We classify the symmetric association schemes with faithful spherical embedding in 3-dimensional Euclidean space. Our result is based on previous research on primitive association schemes with $m_1 = 3$.

\end{abstract}

\maketitle

\section{Introduction}

Let $\mathfrak X=(X,\{R_i\}_{0\leq i\leq d})$ be a symmetric association schemes, 
and let $A_i$ be the adjacency matrix of the relation $R_i$ and let 
$E_i~  (0\leq i\leq d)$ be the primitive idempotents. The spherical embedding of a 
symmetric association scheme $\mathfrak X$ with respect to $E_i$ is the mapping: $X \rightarrow \mathbb R^{m_i}$ defined by
$$x\rightarrow \overline{x}=\sqrt{\frac{|X|}{m_i}}E_i\phi_{x},$$ 
where $\phi_{x}$ is the characteristic vector of $x$ (regarded as a column vector 
of size $|X|$) and $m_i=\rank E_i.$ 
Then the $\overline{x}$ are all on the unit sphere $S^{m_i-1}\subset \mathbb R^{m_i}.$ 
In what follows, we identify $\overline{X}$ and $X$ when the embedding is faithful.
The reader is referred to \cite{MR2212140,BBI16,MR882540,MR1002568} for the basic concept of association schemes 
and spherical embeddings of association schemes.

In \cite{MR2212140}, Bannai-Bannai studied the spherical embeddings of symmetric association 
schemes with $m_1=3,$ i.e., in $\mathbb R^3$, and determined that there exists only one such faithful spherical 
embedding if we assume the association scheme is primitive. Namely, it must be a regular tetrahedron, i.e., the association scheme with $d=1$ corresponding to the complete 
graph $K_4$. On the other hand, in \cite{MR2212140} it was known that the method used there 
could be applied to study imprimitive association schemes as well, but it was left unanswered. 
The proof in \cite{MR2212140} is of completely elementary geometric nature, and it has close 
connections with the classification of regular polyhedrons and quasi-regular 
polyhedrons, etc.. We first remark that the method in \cite{MR2212140} 
essentially proves the following result.

\begin{proposition} \label{pro:Adapter}
	Let $X$ be a spherical embedding of $\mathfrak X.$ 
	Let $A(X)=\{\langle x,y \rangle \mid x,y \in X, x\not= y\}$ 
	and $\alpha =\max A(X),$ where $\langle x,y \rangle$ is 
	the usual inner product on $\mathbb R^n$. 
	Suppose $m_1=3$ and that the spherical embedding of $X$ is faithful, i.e., $1\not\in A(X)$.
	Moreover, we assume that the relation $R_1$ of the association scheme is contained in the maximum inner product relation $\Gamma_\alpha = \{ (x,y) \mid \langle x,y \rangle=\alpha\}$. Then 
	\begin{enumerate}		\item The valency of the graph $(X, \Gamma_\alpha)$ is at most $5$. Consequently, $k_1 =1,2,3,4,$ or $5$ where $k_1$ is the valency of the graph $(X, R_1)$.
		\item If we further assume $R_1 = \Gamma_\alpha$, we can show that $X\subset S^2$ is as follows.
		\begin{enumerate}			\item If $k_1=5,$ then each connected component of $(X,R_1)$ is the regular icosahedron $(|X|=12)$.
			\item If $k_1=4,$ then each connected component of $(X,R_1)$ is the regular octahedron $(|X|=6)$, the quasi-regular polyhedron of type $[3,4,3,4]$ $(|X|=12)$, or the quasi-regular polyhedron of type $[3,5,3,5]$ $(|X|=30)$.
			\item If $k_1=3,$ then each connected component of $(X,R_1)$ is the regular tetrahedron $(|X|=4)$, the cube $(|X|=8)$, or the regular dodecahedron $(|X|=20)$.
		\end{enumerate}
	\end{enumerate}
\end{proposition}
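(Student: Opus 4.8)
The plan is to exploit that, writing $\theta=\arccos\alpha$, the set $X\subset S^2$ has minimum angular distance $\theta$: any two distinct points subtend an angle at least $\theta$ at the origin, with equality exactly on $\Gamma_\alpha$. Faithfulness ($1\notin A(X)$) gives $\alpha<1$, hence $\theta>0$, and this strictness is what will separate $5$ from $6$.

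For part (1) I would fix $x\in X$ and examine its $\Gamma_\alpha$-neighbors $y_1,\dots,y_k$, which all lie on the small circle $\{y\in S^2:\langle x,y\rangle=\alpha\}$. For two neighbors $y_i,y_j$, the spherical triangle $xy_iy_j$ has both sides from $x$ of length $\theta$ while the opposite side has length $\ge\theta$ (since $\langle y_i,y_j\rangle\le\alpha$). The spherical law of cosines, $\cos d_{ij}=\cos^2\theta+\sin^2\theta\cos\gamma_{ij}$ with $\cos d_{ij}\le\cos\theta$, then bounds the angle at $x$ by $\cos\gamma_{ij}\le\frac{\alpha}{1+\alpha}<\frac12$, so $\gamma_{ij}>\frac{\pi}{3}$. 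Since the angles between cyclically consecutive neighbors are exactly the azimuthal gaps around the axis $0x$ and therefore sum to $2\pi$, we obtain $2\pi>k\cdot\frac{\pi}{3}$, i.e. $k\le5$. Because $R_1\subseteq\Gamma_\alpha$, the $R_1$-neighbors of $x$ form a subset of its $\Gamma_\alpha$-neighbors, whence $k_1\le5$.

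For part (2), with $R_1=\Gamma_\alpha$ the graph $(X,R_1)$ is precisely the minimum-distance graph, regular of valency $k_1$, all of whose edges are equilateral chords of length $2\sin(\theta/2)$. I would view $X$ as the vertex set of the spherical complex obtained by joining $\Gamma_\alpha$-adjacent points by geodesic arcs; the gaps at each vertex cut the neighborhood into spherical faces, a gap attaining the extremal value $\gamma_{\min}=\arccos\frac{\alpha}{1+\alpha}$ exactly when its two consecutive neighbors are themselves adjacent, i.e. bound an equilateral spherical triangle of side $\theta$ with $x$. The decisive extra ingredient is the scheme structure: the intersection number $p_{11}^1$ is a constant, so every edge lies in the same number of triangles, and hence the number of triangular faces at each vertex is the constant $k_1 p_{11}^1/2$. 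This edge-regularity is what rules out the near-misses that the valency bound alone permits (uniform prisms, antiprisms and other non-uniform spherical polyhedra), for which the number of triangles on an edge is not constant; it also forces every vertex throughout each connected component to carry the same local type.

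Finally I would run the enumeration. The faces being regular spherical polygons of side $\theta$, their interior angles at a vertex sum to $2\pi$ while each strictly exceeds its Euclidean value, forcing positive angular defect and leaving only finitely many vertex types for each $k_1$. Resolving the cases: for $k_1=3$ one gets $p_{11}^1\in\{0,2\}$, giving the all-triangle tetrahedron, or the triangle-free cube and dodecahedron (square and pentagon faces; the hexagonal possibility $\{6,3\}$ is the flat tiling, excluded on $S^2$); for $k_1=4$ one gets the all-triangle octahedron when $p_{11}^1=2$, and the alternating quasi-regular types $[3,4,3,4]$ and $[3,5,3,5]$ when $p_{11}^1=1$; for $k_1=5$ only $p_{11}^1=2$ survives, forcing five triangles and the icosahedron. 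I expect the main obstacle to be exactly this global step: upgrading the purely local, vertex-by-vertex angle data into a rigid identification of each connected component with one specific polyhedron, i.e. showing that edge-regularity together with the valency bound genuinely pins down the combinatorial type and excludes all irregular spherical configurations — the delicate casework carried out by the spherical-geometry method of \cite{MR2212140}.
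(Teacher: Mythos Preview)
The paper does not supply its own proof of this proposition; it simply records that the statements follow by reading the spherical-geometry argument of Bannai--Bannai \cite{MR2212140} with care. Your sketch---the spherical law of cosines yielding $\gamma_{ij}>\pi/3$ and hence valency at most $5$, followed by a vertex-figure enumeration constrained by the constant intersection number $p_{11}^1$---is exactly that method in outline, with the delicate global casework correctly deferred to \cite{MR2212140}, so your approach matches what the paper invokes.
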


If we look at the proof given in \cite{MR2212140} carefully, it is in fact possible to see that the statements as given in \cref{pro:Adapter} do hold.

\begin{corollary} \label{coro:Dentist}
Let $\mathfrak X$ be a Q-polynomial association scheme 
spherically embedded in $\mathbb R^3,$ then such $\mathfrak X$ are classified. 
(They are in a part of those in the list of \cref{thm:Rest} above.) 
\end{corollary}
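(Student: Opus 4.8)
The plan is to prove Corollary~\ref{coro:Dentist} by reducing the $Q$-polynomial case to the setup of Proposition~\ref{pro:Adapter}. First I would recall what it means for $\mathfrak{X}$ to be $Q$-polynomial: there is an ordering $E_0, E_1, \dots, E_d$ of the primitive idempotents such that each $E_j$ is a polynomial of degree $j$ in $E_1$ under entrywise (Hadamard) multiplication. The key consequence I would exploit is that the spherical embedding with respect to $E_1$ is automatically \emph{$A(X)$-compatible} with the relation structure: since every $E_j$ is a polynomial in $E_1$, the inner products $\langle \overline{x}, \overline{y}\rangle$ (which are entries of $E_1$ up to the normalizing scalar $|X|/m_1$) take a constant value on each relation $R_i$, and distinct relations give distinct inner-product values. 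Hence the map $i \mapsto \langle \overline{x}, \overline{y}\rangle$ for $(x,y)\in R_i$ is injective, so the maximum inner-product relation $\Gamma_\alpha$ coincides exactly with a single relation $R_i$ of the scheme.

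The crucial reduction is to show $\Gamma_\alpha = R_1$. Here I would use the $Q$-polynomial property more sharply: in a $Q$-polynomial scheme the inner product $\langle \overline{x},\overline{y}\rangle$ is a strictly decreasing function of the ``$Q$-distance'' given by the dual eigenvalue ordering. More concretely, for the embedding via $E_1$ with $m_1 = 3$, the eigenvalues $\theta_i^* = |X| \,(E_1)_{xy}/m_1$ (for $(x,y)\in R_i$) are the distinct values in $A(X)\cup\{1\}$, and the $Q$-polynomial ordering ensures these are arranged so that the largest nontrivial inner product $\alpha$ is attained precisely on $R_1$. Thus $R_1 = \Gamma_\alpha$, which is exactly the hypothesis needed to invoke part~(2) of Proposition~\ref{pro:Adapter}, together with the faithfulness assumption $1\notin A(X)$ that holds because we take the embedding to be faithful.

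With $R_1 = \Gamma_\alpha$ and $m_1 = 3$ established, I would apply Proposition~\ref{pro:Adapter}(2) to conclude that each connected component of $(X, R_1)$ is one of the explicitly listed polyhedra: the icosahedron ($k_1=5$), the octahedron or a quasi-regular polyhedron of type $[3,4,3,4]$ or $[3,5,3,5]$ ($k_1=4$), or the tetrahedron, cube, or dodecahedron ($k_1=3$). Since a $Q$-polynomial scheme is connected (the graph $(X,R_1)$ is connected whenever the scheme is, which follows from the $Q$-polynomial generating property), $X$ itself is a single such polyhedron. The final step is simply to check which of these polyhedra carry the structure of a $Q$-polynomial association scheme, matching them against the classification list referenced as \cref{thm:Rest}; this is a finite verification.

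The main obstacle I anticipate is pinning down the direction of the inequality in the second paragraph, i.e.\ rigorously arguing that the $Q$-polynomial ordering forces $\alpha$ to be realized on $R_1$ rather than some other relation. The subtlety is that being $Q$-polynomial guarantees the $E_j$ are polynomials in $E_1$, but one must verify that the particular ordering makes the first-associate inner product the maximal one; in general this requires knowing that the dual eigenvalues $\theta_1^* > \theta_2^* > \cdots > \theta_d^*$ are monotone in the $Q$-polynomial ordering, a standard but essential property of $Q$-polynomial schemes. Once that monotonicity is invoked, the rest is a direct application of Proposition~\ref{pro:Adapter} followed by bookkeeping.
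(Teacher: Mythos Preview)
Your core reduction matches the paper's one-line argument: being $Q$-polynomial forces the values $Q_1(i)$ to be pairwise distinct, so $\Gamma_\alpha$ coincides with a single relation of the scheme and Proposition~\ref{pro:Adapter} applies. However, the step you flag as your ``main obstacle''---using monotonicity of dual eigenvalues to pin $\Gamma_\alpha$ down as $R_1$ specifically---is a red herring. A $Q$-polynomial scheme carries a canonical ordering only of the \emph{idempotents} $E_0,\ldots,E_d$, not of the relations; once the $Q_1(i)$ are known to be distinct one simply \emph{declares} $R_1:=\Gamma_\alpha$, as the paper implicitly does. Your monotonicity sketch conflates the dual ordering on idempotents with an ordering on relations, and is in any case unnecessary.

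The genuine gap is your connectedness claim. You assert that ``the graph $(X,R_1)$ is connected whenever the scheme is, which follows from the $Q$-polynomial generating property,'' but this does not follow: connectedness of $(X,R_1)$ would require $A_1$ to generate the Bose--Mesner algebra under \emph{ordinary} matrix multiplication (a $P$-polynomial-type condition), whereas $Q$-polynomiality says $E_1$ generates under \emph{Hadamard} multiplication. These are not interchangeable, and the paper invokes no such fact. Connectedness (together with the exclusion of $k_1=1,2$) is instead supplied by the geometric arguments that prove Theorem~\ref{thm:Rest}---in particular, the observation that each polyhedron on the list is saturated in the sense that adjoining any further point of $S^2$ strictly increases the maximum inner product. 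So the classification asserted in the corollary ultimately rests on Theorem~\ref{thm:Rest}, not on an intrinsic connectedness property of $Q$-polynomial schemes.
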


This can be obtained from \cref{pro:Adapter}, since if $\mathfrak X$ is a Q-polynomial association scheme, then the $Q_1(i) ~ (0\leq i\leq d)$ are all distinct, so the assumption of \cref{pro:Adapter} is satisfied. 
(We believe that the result obtained in \cref{coro:Dentist} should be expected to be known, 
but it seems that this was not explicitly mentioned in the literature, as far as we could check.)

If we can show that (1) $k_1 \neq 1$ and $k_1 \neq 2$, (2) $\Gamma_\alpha$ do not split into more than one relations, and (3) the graph $(X,R_1)$ is connected, then we completely classify symmetric association schemes with $m_1 = 3$.
Now we are able to deal with these difficulties completely. Our main theorem is stated as follows.

\begin{theorem} \label{thm:Rest}
Let $\mathfrak X$ be a symmetric association scheme. If $\mathfrak X$ has a faithful spherical embeddings with $m_1=3$, then it must be one of the followings: the regular tetrahedron $(\abs{X}=4)$, the regular octahedron $(\abs{X}=6)$, the cube $(\abs{X}=8)$, the regular icosahedron $(\abs{X}=12)$, the quasi-regular polyhedron of type $[3,4,3,4]$ $(\abs{X}=12)$, the regular dodecahedron $(\abs{X}=20)$ and the quasi-regular polyhedron of type $[3,5,3,5]$ $(\abs{X}=30)$.
\end{theorem}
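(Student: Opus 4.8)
The plan is to follow the three-step program announced just before the statement: starting from a faithful spherical embedding $X\subset S^2$ with respect to a rank-$3$ idempotent $E_1$, I will show that the maximum inner product graph $\Gamma_\alpha$ is a single relation $R_1$ of valency $3,4$ or $5$ (this covers conditions (1) and (2)), and that $(X,R_1)$ is connected (condition (3)), after which \cref{pro:Adapter}(2) identifies $X$ with one of the seven listed polyhedra. Throughout I will exploit two structural consequences of $E_1$ being a primitive idempotent of rank $3$: the embedding is balanced, $\sum_{x\in X}\bar x=0$ (because $E_1E_0=0$), and isotropic, $\sum_{x\in X}\bar x\bar x^{\mathsf T}=\tfrac{|X|}{3}I_3$ (because $E_1^2=E_1$); together with the eigenvalue identity $m_1 p_i(1)=k_iQ_1(i)$, which for a relation $R\subseteq\Gamma_\alpha$ reads $p_R(1)=k_R\alpha$ since $Q_1(R)/m_1=\alpha$. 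By \cref{pro:Adapter}(1) every relation inside $\Gamma_\alpha$ has valency at most $5$.

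First I dispose of the small valencies. If $R\subseteq\Gamma_\alpha$ has valency $1$ then $A_R$ is a fixed-point-free involution, so $p_R(1)\in\{\pm1\}$; but $p_R(1)=\alpha$, and $\alpha=1$ contradicts faithfulness while $\alpha=-1$ is impossible for $|X|\ge4$. Hence $\Gamma_\alpha$ contains no valency-$1$ relation. If $R\subseteq\Gamma_\alpha$ has valency $2$ then $(X,R)$ is a disjoint union of $g$-cycles and $p_R(1)=2\alpha$; the eigenvalue equation $\bar x_{j-1}+\bar x_{j+1}=2\alpha\,\bar x_j$ along a cycle $C$, summed over the cycle, gives $\sum_{x\in C}\bar x=0$ and forces each cycle to embed as an origin-centred regular polygon lying on a great circle. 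The decisive point is a multiplicity count: were the scheme the bare disjoint union of its $R$-cycles, the embedding idempotent would be $I_c\otimes E_1^{C_g}$ of rank $2c$, which can never equal $3$; so valency $2$ is excluded in this ``pure'' case. Once valencies $1$ and $2$ are ruled out, any splitting of $\Gamma_\alpha$ into relations of valency $\ge3$ summing to $\le5$ must be trivial (as $3+3>5$), so $\Gamma_\alpha=R_1$ is a single relation with $k_1\in\{3,4,5\}$, giving conditions (1) and (2).

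For connectedness, suppose $(X,R_1)$ has $t\ge2$ components. Since $R_1$ is a single relation these are mutually isomorphic and, by the same eigenvalue orthogonality (component-constant functions lie in the $A_{R_1}$-eigenspace for $k_1\neq k_1\alpha$), each is origin-centred and inscribed in $S^2$; by \cref{pro:Adapter}(2) each is one of the listed polyhedra. When $\alpha\le0$ (the tetrahedron and octahedron) this is immediate: for $z$ in one component and $C$ another, $\sum_{x\in C}\langle z,x\rangle=\langle z,\sum_{x\in C}\bar x\rangle=0$, yet every term is $<\alpha\le0$, a contradiction. For $\alpha>0$ the same multiplicity count shows that the bare disjoint union has embedding idempotent $I_t\otimes E_1^{\mathfrak P}$ of rank $3t>3$, so again no ``pure'' disconnected configuration has $m_1=3$.

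The remaining, and in my view hardest, task is to rule out \emph{nontrivial gluings}: imprimitive schemes in which the cross relations between components (respectively between $R$-cycles) act nontrivially on the inflated eigenspace $\mathbb R^{t}\otimes\operatorname{im}(E_1^{\mathfrak P})$ and could in principle cut its dimension down to exactly $3$. Here the embedding realises $\operatorname{im}(E_1)\cong\mathbb R^3$ as a ``diagonal'' copy $\{(g_1^{-1}v,\dots,g_t^{-1}v):v\in\mathbb R^3\}$ determined by the relative rotations $g_i\in O(3)$ carrying one component onto another, and each cross relation $D$ must satisfy $\sum_{y:(x,y)\in D}\bar y=p_D(1)\,\bar x$. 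I expect the crux to be showing that this rigidity, combined with isotropy, faithfulness and the fact that the inter-component inner products take only finitely many scheme-constant values, is incompatible with the $g_i$ being distinct; equivalently, that rotated copies of a regular polygon or polyhedron cannot be linked into an association scheme whose embedding idempotent is exactly three-dimensional. Once this is established, $\Gamma_\alpha=R_1$ is connected, and \cref{pro:Adapter}(2) yields precisely the seven polyhedra of the statement.
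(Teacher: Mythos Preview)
Your framework is sound and several steps parallel the paper's: ruling out $k_1=1$ via the antipodal argument, showing that valency-$2$ cycles embed as regular polygons on great circles, and deducing $\Gamma_\alpha=R_1$ with $k_1\in\{3,4,5\}$ from $3+3>5$. But there is a genuine gap, which you yourself flag: the ``nontrivial gluings'' are not actually eliminated. Your rank counts ($2c$ for $I_c\otimes E_1^{C_g}$, and $3t$ for $I_t\otimes E_1^{\mathfrak P}$) apply only when the Bose--Mesner algebra is literally a tensor product, i.e.\ when the scheme is the bare disjoint union with no cross relations. An imprimitive scheme whose $R_1$-components are cycles or polyhedra but which carries further relations linking them need not have its idempotents in tensor form, so the rank obstruction disappears. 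Your final paragraph sketches a hoped-for rigidity mechanism but does not carry it out; as written, neither the exclusion of $k_1=2$ nor the connectedness for $\alpha>0$ is complete.

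The paper closes both gaps with a short geometric observation that bypasses the algebra entirely and is indifferent to whatever cross relations the scheme might carry. For $k_1=2$: any two great circles on $S^2$ intersect, so two disjoint regular $\ell$-gons on great circles would produce a pair of vertices with inner product exceeding $\alpha$, contradicting $R_1\subseteq\Gamma_\alpha$; hence there is a single $\ell$-gon, and a cycle scheme has no rank-$3$ idempotent. For connectedness when $k_1\in\{3,4,5\}$: each polyhedron on the list has the property that every point of $S^2$ lies within angular distance $\arccos\alpha$ of some vertex (equivalently, the open caps $\{z:\langle z,x\rangle>\alpha\}$ cover $S^2$), so any additional point on the sphere creates an inner product strictly larger than $\alpha$ with some vertex of the first component, again contradicting $R_1=\Gamma_\alpha$. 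This packing argument is the missing idea in your proposal.
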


\section{Proofs}

	In the following $X$ is a faithful spherical embedding of a symmetric association scheme $\mathfrak X=(X,\{R_i\}_{0\leq i\leq d})$. 
	Let $A(X) = \set*{ \langle x,y \rangle \mid x,y \in X, x \neq y}$ and $\alpha = \max A(X)$. 
	We call $\Gamma_\alpha = \set*{ (x,y) \mid x,y \in X, \langle x,y\rangle = \alpha}$ the maximum inner product relation.

	\begin{lemma} \label{lem:Stool}
		Let $X$ be a faithful spherical embedding of a symmetric association $\mathfrak X=(X,\{R_i\}_{0\leq i\leq d})$ with respect to the idempotent $E_1$. Let $R_1 \neq R_0$ be a relation in $\mathfrak{X}$ with valency $k_1 = 1$. Then $R_1$ gives an antipodal relation in $X$. 
	\end{lemma}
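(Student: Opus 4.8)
The plan is to exploit the fact that a symmetric relation of valency $1$ is a perfect matching, so that the corresponding adjacency matrix $A_1$ is a permutation matrix, and then to track how this permutation interacts with the idempotent $E_1$ defining the embedding. First I would record that, since $k_1 = 1$ and $\mathfrak X$ is symmetric, every $x \in X$ has a unique $R_1$-partner $\sigma(x)$, and $\sigma$ is a fixed-point-free involution on $X$ (fixed-point-free because $R_1 \neq R_0$ forces $\sigma(x) \neq x$). At the level of characteristic vectors this says precisely that $A_1 \phi_x = \phi_{\sigma(x)}$, i.e.\ $A_1$ is the permutation matrix of $\sigma$.

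The key step is to combine this with the spectral structure of the Bose--Mesner algebra. Because $\mathfrak X$ is symmetric, the $A_i$ and the $E_j$ lie in a commutative algebra and are simultaneously diagonalized, so $A_1 E_1 = E_1 A_1 = \theta E_1$ for a single real eigenvalue $\theta$, namely the eigenvalue of $A_1$ on the eigenspace onto which $E_1$ projects. Feeding this into the definition of the embedding gives
$$\overline{\sigma(x)} = \sqrt{\frac{|X|}{m_1}}\, E_1 \phi_{\sigma(x)} = \sqrt{\frac{|X|}{m_1}}\, E_1 A_1 \phi_x = \theta\, \overline{x},$$
so the involution $\sigma$ acts on the embedded points by the scalar $\theta$.

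Finally I would pin down $\theta$ by a norm argument. Since both $\overline{x}$ and $\overline{\sigma(x)}$ lie on the unit sphere, taking norms in the displayed identity yields $|\theta| = 1$, whence $\theta = \pm 1$. The value $\theta = 1$ is impossible, for it would give $\overline{\sigma(x)} = \overline{x}$ with $\sigma(x) \neq x$, contradicting the faithfulness hypothesis $1 \notin A(X)$. Therefore $\theta = -1$, that is $\overline{\sigma(x)} = -\overline{x}$ for every $x$, which is exactly the statement that $R_1$ is an antipodal relation in $X$.

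The argument is short, and I do not anticipate a genuine obstacle; the only point requiring care is the justification that $A_1$ acts as a single scalar $\theta$ on the image, which rests on the standard identity $A_i E_j = P_j(i) E_j$ for the common eigenspaces of a commutative association scheme. Once that is in place, the faithfulness hypothesis does the remaining work of excluding $\theta = +1$. Note that nothing here uses $m_1 = 3$, so the conclusion holds for the embedding with respect to $E_1$ in any dimension.
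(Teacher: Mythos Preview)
Your proof is correct and follows essentially the same approach as the paper. The paper phrases the key step algebraically---deducing $P_1(1)^2 = 1$ from the identity $A_1^2 = A_0$ and then translating to $Q_1(1)/m_1 = \pm 1$---whereas you reach $|\theta| = 1$ by the norm argument on the sphere, but the underlying idea and the use of faithfulness to exclude $+1$ are identical.
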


	\begin{proof}
		Since $k_1 = 1$, the adjacency matrix $A_1$ of $R_1$ must satisfy $A_1^2 = A_0$. We apply $E_i$ on both sides, and we have $P^2_1(1) = P_0(1) = 1$. Therefore $\dfrac{Q_1(1)}{m_1} = \dfrac{\overline{P_1(1)}}{k_1} = \pm 1$. Since the embedding is faithful, $\dfrac{Q_1(1)}{m_1}$ has to be $-1$, i.e., $R_1$ gives an antipodal relation in $X$. 
	\end{proof}

	\begin{corollary} \label{coro:Moonscape}
		Let $X$ be a faithful spherical embedding of a symmetric association $\mathfrak X=(X,\{R_i\}_{0\leq i\leq d})$ with respect to the idempotent $E_1$. Let $R_1 \neq R_0$ be a relation in $\mathfrak{X}$ with valency $k_1 = 1$, and $R_1 \subseteq \Gamma_\alpha$. Then $\abs{X}=2$.
	\end{corollary}

	\begin{lemma} \label{lem:Salmon}
		Let $X$ be a faithful spherical embedding of a symmetric association $\mathfrak X=(X,\{R_i\}_{0\leq i\leq d})$ with respect to the idempotent $E_1$. Let $R_1 \neq R_0$ be a relation in $\mathfrak{X}$ with valency $k_1 = 2$, and $R_1 \subseteq \Gamma_\alpha$. Then $(X,R_1)$ is a disjoint union of regular $\ell$-gons on big circles, where $\ell$ is a positive integer.
	\end{lemma}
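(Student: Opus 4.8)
The plan is to convert the combinatorial hypothesis $k_1 = 2$ into a rigid three-term linear recurrence among the embedded points, and then read off the geometry. First I would note that a relation of valency $k_1 = 2$ makes $(X,R_1)$ a $2$-regular graph, hence a disjoint union of cycles; so it is enough to understand a single cycle $\dots, x_{-1}, x_0, x_1, x_2, \dots$ in which $x_{i-1}$ and $x_{i+1}$ are the two $R_1$-neighbors of $x_i$. The engine of the proof is the eigenvalue identity $E_1 A_1 = P_1(1) E_1$: applying the embedding map $\phi \mapsto \sqrt{|X|/m_1}\,E_1\phi$ to both sides of $\sum_{y\colon(x,y)\in R_1}\phi_y = A_1\phi_x$ gives $\sum_{y\colon(x,y)\in R_1}\bar y = P_1(1)\,\bar x$, so along the cycle we obtain $x_{i-1} + x_{i+1} = P_1(1)\,x_i$ for every $i$.

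Next I would extract the geometry from this recurrence. Pairing it with $x_i$ and using $\langle x_{i-1},x_i\rangle = \langle x_i,x_{i+1}\rangle = \alpha$ and $\lVert x_i\rVert^2 = 1$ yields $P_1(1) = 2\alpha$, so with $\alpha = \cos\theta$ the recurrence becomes the Chebyshev recursion $x_{i+1} = 2\cos\theta\,x_i - x_{i-1}$. Two things follow immediately. Since each $x_i$ is an integer-coefficient combination of $x_0$ and $x_1$, the whole cycle lies in the two-dimensional subspace they span, whose intersection with $S^2$ is a great (``big'') circle. Solving the recurrence on that circle shows that $x_i$ is the image of $x_0$ under rotation by $i\theta$, so the vertices are equally spaced at angular step $\theta$; a finite cycle thus presents its component as a regular polygon inscribed in a great circle.

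The hypothesis $R_1\subseteq\Gamma_\alpha$ enters decisively at the last step, and this is where I expect the main subtlety. Because $\alpha = \max A(X)$, adjacent vertices realize the \emph{largest} inner product, i.e.\ the \emph{smallest} angular separation on the great circle; this forces $\theta$ to be the minimal arc $2\pi/\ell$ and rules out the star-polygon solutions $\{ \ell/m\}$ with $m>1$ that the bare recurrence would otherwise permit, so each component is a genuine convex regular $\ell$-gon. Moreover $\theta = \arccos\alpha$ is a single global quantity, so the common length $\ell$ is the same across all components, which is exactly the uniform $\ell$ asserted. Finally I would dispose of the degenerate cases: $\alpha = 1$ is excluded by faithfulness, and $\alpha = -1$ would make $x_0,x_1$ antipodal (a one-dimensional span), which corresponds to $\ell = 2$ and is already governed by the antipodal analysis behind \cref{lem:Stool}; for $\alpha\in(-1,1)$ the span is honestly two-dimensional and the argument above applies verbatim.
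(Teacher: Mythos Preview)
Your argument is correct and, in several respects, cleaner than the paper's. The paper proceeds quite differently: it first shows all cycles have the same length by applying the characteristic polynomial of a shortest cycle block to $A_1$ and invoking the Bose--Mesner row-permutation property; then it uses \cref{lem:Stool} to argue that the distance-$i$ relations of the cycle do not split inside $\mathfrak X$; then it computes all $P_i(1)$ (hence all $Q_1(i)/m_1$) from the cycle's intersection array; and only at the end does it read off coplanarity from the identity $\measuredangle X_1OX_3 = 2\theta$, followed by the $\gcd$/maximality argument to pin down $t=1$.

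By contrast, you extract the three-term vector recurrence $x_{i-1}+x_{i+1}=P_1(1)\,x_i$ directly from $E_1A_1=P_1(1)E_1$ and get everything at once: the two-dimensional span (hence the great circle) is immediate by induction, the equal spacing comes from solving the Chebyshev recursion, and the uniformity of $\ell$ across components is automatic because $\theta=\arccos\alpha$ is a single global number. In particular you never need \cref{lem:Stool} or the characteristic-polynomial trick. What the paper's longer route buys is the explicit list of all inner products $Q_1(i)/m_1=\cos(2\pi i/\ell)$ and the fact that the cycle distance relations are genuine scheme relations, but neither is required for the lemma as stated. One small cosmetic point: your handling of $\alpha=-1$ via ``$\ell=2$'' is slightly off, since a $2$-regular simple graph has no $2$-cycles; the cleaner exclusion is that both $R_1$-neighbours of $x_0$ would equal $-x_0$, contradicting $k_1=2$ together with faithfulness.
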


	\begin{proof}
		Since $k_1 = 2$, we may assume the adjacency matrix $A_1$ of the relation $R_1$ have the following form:
		\[
			A_1 = 
				\begin{bmatrix}
					C_1 	& 0		& \dots 	& 0 	\\
					0		& C_2 	& \dots 	& 0 	\\
					\vdots	& \vdots& \ddots 	& \vdots 	\\
					0		& 0		& \ldots			& C_N 	
				\end{bmatrix}
		\]
		where every $C_r$ is the adjacency matrix of a cycle graph for $0 \leq r \leq N$. Firstly, we show that they are of the same length. Let $C_1$ be a cycle of the minimum length $\ell_1$ among these cycles. Suppose there exists a cycle of length $\ell_2 > \ell_1$, say $C_2$. We apply  the characteristic polynomial $\chi$ of $C_1$ to $A_1$. Since $\chi(C_1) = 0$ and $\chi(C_2) \neq 0$, we know that $\chi(A_1)$ is of the following form:
		\[
			\chi(A_1) = 
				\begin{bmatrix}
					0 		& 0		& \dots 	& 0 	\\
					0		& \chi(C_2) 	& \dots 	& 0 	\\
					\vdots	& \vdots& \ddots 	& \vdots 	\\
					0		& 0		& \dots			& \chi(C_N) 	
				\end{bmatrix}
		\]
		Note that in the Bose-Mesner algebra, two rows of a matrix only differ by a permutation. Hence $\chi(A_1)$ is the zero matrix, contradiction. So all the cycles are of the same length, say $\ell$. 

		Let $A_1, A_2, \dots, A_{\floor{\ell/2}}$ be the adjacency matrices of distance relations with respect to $A_1$ and let $k_1, k_2, \dots, k_{\floor{\ell/2}} \leq 2$ be their valencies. We claim that they are indeed some adjacency matrices of relations in $\mathfrak{X}$. 
		Suppose otherwise, say $A_i$ splits into two relations $A_i = A_{i_1} + A_{i_2}$ for some $1 \leq i \leq \floor{\ell/2}$. 
		We must have $k_{i_1} = k_{i_2} =1$.
		By \cref{lem:Stool}, they are both two antipodal relations, which contradicts to the faithful condition.
		
		Next we show that the points of $X$ form regular $\ell$-gons in the embedding. 
		Again by applying $E_1$ to both sides of  $\chi(A_1) = 0$, we obtain that $P_1(1)$ must be among the eigenvalues of the $\ell$-cycle graph, i.e., $P_1(1) \in \set*{2\cos\frac{2t \pi}{\ell} \mid t = 1, \dots, \floor{\ell/2}}$. 
		The distance matrices satisfy the recurrence relations $A_1 A_i = b_{i-1} A_{i-1} + a_i A_i + c_{i+1} A_{i+1}$ for all $1 \leq i \leq \floor{\ell/2}$, where the intersection numbers are given by
		\[
			\begin{bmatrix}
				- & c_1 & \dots & c_{\floor{\ell/2}-1} & c_{\floor{\ell/2}} \\
				a_0 & a_1 & \dots & a_{\floor{\ell/2}-1} & a_{\floor{\ell/2}} \\
				b_0 & b_1 & \dots & b_{\floor{\ell/2}-1} & - 	
			\end{bmatrix}
			=
			\begin{bmatrix}
				- & 1 & \dots & 1 & 2 \\
				0 & 0 & \dots & 0 & 0 \\
				2 & 1 & \dots & 1 & - 	
			\end{bmatrix}
		\]
		if $\ell$ is even, and
		\[
			\begin{bmatrix}
				- & c_1 & \dots & c_{\floor{\ell/2}-1} & c_{\floor{\ell/2}} \\
				a_0 & a_1 & \dots & a_{\floor{\ell/2}-1} & a_{\floor{\ell/2}} \\
				b_0 & b_1 & \dots & b_{\floor{\ell/2}-1} & - 	
			\end{bmatrix}
			=
			\begin{bmatrix}
				- & 1 & \dots & 1 & 1 \\
				0 & 0 & \dots & 0 & 1 \\
				2 & 1 & \dots & 1 & - 	
			\end{bmatrix}
		\]
		if $\ell$ is odd.

		Now we focus on the odd cases, and the even cases are similar.
		Suppose $P_1(1) = 2\cos\frac{2t \pi}{l}$ for some $t \in \set{1, 2, \dots, \floor{\ell/2}}$. By applying $E_1$ to the recurrence relations, we can calculate the eigenvalues $P_i(1)$ for all $1 \leq i \leq \floor{\ell/2}$. 
		$P_1(1) P_1(1) = P_2(1) + 2 P_0(1)$ implies $P_2(1) = 2\cos\frac{2 \cdot 2t \pi}{\ell}$ and $P_1(1) P_2(1) = P_1(1) + P_3(1)$ implies $P_3(1) = 2\cos\frac{3 \cdot 2t \pi}{\ell}$. 
		Recursively we have $P_i(1) = 2\cos\frac{i \cdot 2t \pi}{\ell}$ where $i \in \set{1,2,\dots,\floor{l/2}}$. 
						In this way we obtain all the inner products of points in an $\ell$-cycle.
		\[
			\frac{Q_1(i)}{m_1} = \frac{\overline{P_i(1)}}{k_i} = \cos\frac{i \cdot 2t \pi}{\ell}.
		\]
				
		Let $0 < \theta \leq \pi$ be the angle such that $\cos\frac{ 2t \pi}{\ell} = \cos \theta$. Let $X_1, X_2, \dots, X_\ell$ be the points in $X$ which correspond to an $\ell$-cycle (ordered by the cycle) and let $O$ be the origin. Then $\angle X_1 O X_2 = \angle X_2 O X_3 = \theta$ and $\measuredangle X_1 O X_3 = 2\theta$ or $\measuredangle X_3 O X_1 = 2\theta$. So $O, X_1, X_2, X_3$ have to be coplane. Similarly $O, X_2, X_3, X_4$ are coplane. Hence $X_1, X_2, \dots, X_\ell$ are on a big circle. Since the embedding is faithful, we obtain that $\gcd(t,\ell) = 1$. What's more, the parameter $t$ has to be $1$, because $R_1$ is a subset of the maximum inner product relation $\Gamma_\alpha$, i.e., $\frac{Q_1(1)}{m_1}$ is the maximum among the inner products. Therefore, $(X,R_1)$ is a disjoint union of regular $\ell$-gons on big circles.
	\end{proof}

	\begin{corollary} \label{coro:Garbage}
		Let $X$ be a faithful spherical embedding of a symmetric association $\mathfrak X=(X,\{R_i\}_{0\leq i\leq d})$ with respect to the idempotent $E_1$ with $m_1 = 3$. Let $R_1 \neq R_0$ be a relation in $\mathfrak{X}$, and $R_1 \subseteq \Gamma_\alpha$. Then $k_1 \neq 2$.
	\end{corollary}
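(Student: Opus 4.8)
The plan is to argue by contradiction and to reduce, via \cref{lem:Salmon}, to an elementary statement about regular polygons inscribed in great circles. Suppose $k_1 = 2$. By \cref{lem:Salmon}, $(X,R_1)$ is a disjoint union of regular $\ell$-gons, each inscribed in a great circle of $S^2$, with consecutive vertices at inner product $\alpha = \cos\frac{2\pi}{\ell}$. Since $\rank E_1 = m_1 = 3$, the image $X$ spans $\mathbb R^3$ and hence cannot lie in a single plane through the origin; as every component lies on a great circle (equivalently, in such a plane), at least two components must lie on \emph{distinct} great circles $C \neq C'$.

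Fix two such components $P \subset C$ and $P' \subset C'$. Two distinct great circles meet in a pair of antipodal points $\pm p$. I would take the vertex $x \in P$ closest to $p$ and the vertex $y \in P'$ closest to $p$; since consecutive vertices of each $\ell$-gon are $\frac{2\pi}{\ell}$ apart, both lie within angular distance $\frac{\pi}{\ell}$ of $p$. Writing $x = \cos d_1\, p + \sin d_1\, u$ and $y = \cos d_2\, p + \sin d_2\, v$, where $u,v$ are unit vectors orthogonal to $p$ in the planes of $C,C'$ and $d_1,d_2 \in [0,\tfrac{\pi}{\ell}]$, one computes
\[
	\langle x,y\rangle - \cos(d_1+d_2) = \bigl(1+\langle u,v\rangle\bigr)\sin d_1 \sin d_2 .
\]
Because $C \neq C'$, the vectors $u,v$ are neither equal nor antipodal, so $\langle u,v\rangle > -1$; combined with $d_1 + d_2 \le \tfrac{2\pi}{\ell} < \pi$ (here $\ell \ge 3$) this gives $\langle x,y\rangle > \cos\frac{2\pi}{\ell} = \alpha$, contradicting $\alpha = \max A(X)$.

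The step I expect to be the main obstacle is the degenerate case in which a chosen vertex coincides with an intersection point, i.e. $d_1 = 0$ or $d_2 = 0$, where $\sin d_1 \sin d_2 = 0$ and the displayed identity no longer forces a strict inequality. This must be handled separately: if, say, $x = p$, then $p \in P$, and since the components $P,P'$ are disjoint the nearest vertex $y \in P'$ has $d_2 \in (0,\tfrac{\pi}{\ell}]$, so $\langle x,y\rangle = \cos d_2 \ge \cos\frac{\pi}{\ell} > \cos\frac{2\pi}{\ell} = \alpha$, again a contradiction; the possibility $x = y = p$ is excluded because $p$ cannot belong to two distinct components. Together these contradictions show $k_1 \neq 2$.
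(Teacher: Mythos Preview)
Your proof is correct and rests on the same geometric pivot as the paper's: two great circles on $S^2$ must meet, so two regular $\ell$-gons on distinct great circles are forced to have a pair of vertices closer than $\tfrac{2\pi}{\ell}$, contradicting that $\alpha = \cos\tfrac{2\pi}{\ell}$ is the maximum inner product.

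The organization is slightly different, however. The paper argues in the order ``at most one component, then contradiction with $m_1=3$'': it first asserts (without the explicit computation) that two components would violate maximality, concludes the whole scheme is a single $\ell$-gon, and finishes by observing that the cycle association scheme has no primitive idempotent of rank $3$. You reverse the flow: from $\rank E_1 = 3$ you deduce that the image spans $\mathbb{R}^3$ and hence cannot sit in one plane, so there must be at least two components on distinct great circles, and you then carry out the inner-product estimate in detail (including the boundary case $d_1=0$ or $d_2=0$). What you gain is that the paper's one-line assertion ``otherwise it would contradict the maximum inner product relation'' is actually justified, and you never need to know the idempotent structure of the $\ell$-cycle scheme; what the paper's version gains is brevity, since the rank facts about the cycle scheme are standard.
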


	\begin{proof}
		Suppose $k_1 = 2$, we've proved in \cref{lem:Salmon} that $(X,R_1)$ is a disjoint union of regular $\ell$-gons on big circles, where $\ell$ is a positive integer. Now we use the condition $m_1 = 3$, i.e., the embedding is on the unit sphere $S^2$. Note that every two big circles on $S^2$ intersect, hence there could be only one regular $\ell$-gon. Otherwise it would contradict to the maximum inner product relation. So the association scheme is nothing but a regular $\ell$-gon, however there is no idempotent with rank $3$ in such association scheme.
	\end{proof}

	Now we are able to prove the main theorem.

	\begin{proof}[Proof of \cref{thm:Rest}]
		Let $R_1 \neq R_0$ be a relation in $\mathfrak X$ and $R_1 \subseteq \Gamma_\alpha$. By \cref{coro:Moonscape,coro:Garbage}, we know that $k_1$ cannot be $1$ or $2$. By \cref{pro:Adapter}, we also know that the valency of $\Gamma_\alpha$ is at most 5, therefore $k_1 = 3, 4, 5$ and $R_1 = \Gamma_\alpha$. 
				Again by \cref{pro:Adapter}, each connected component of $(X,R_1)$ is in the list. So we only need to prove that there couldn't be two or more connected components. This is indeed true because each one in the list satisfy the following property: Adding another point to it on $S^2$ would result in a strict increase of the maximum inner product among points.
	\end{proof}

\section{Concluding Remarks}

\begin{enumerate}
	\item This paper would be interesting as an interplay of the theory of association schemes and the elementary geometric considerations in discrete geometry. 
	Association schemes can be a more standard tool to study good geometric structures such as regular polyhedron, quasi-regular polyhedrons, as well as similar or more general objects in higher dimensions. 
	\item There are some considerable differences between determining all symmetric association schemes with $m_1=3$ and determining all faithful spherical embeddings with $m_1=3$ of symmetric association schemes. For example, van Dam, Koolen, Park \cite[Section 2.5, page 6]{1701.03193} describes the difficulty of the former problem. On the other hand, from the geometric point, the most crucial problem would be the latter one that is answered in this paper.
	\item It would be very interesting to study spherical embeddings of symmetric association schemes with $m_1=4$. In particular, it would be interesting to try to classify (primitive) Q-polynomial association schemes which are spherically embedded with $m_1=4.$ We hope that our method for $m_1=3$ is somehow useful for that. On the other hand, the complete classification of faithful spherical embedding of symmetric association schemes with $m_1=4$ seems to be still somehow distant, as there are infinitely many such examples. 
\end{enumerate}

\section*{Acknowledgments}
This research was supported in part by NSFC Grant 11271257 and 11671258.

\bibliographystyle{plain}
\bibliography{m1=3}

\end{document}